\documentclass[11pt]{amsart}

\title{Power-Saving Bounds for Monic Minkowski Polynomials}

\usepackage{amsmath,amssymb,amsthm,amsfonts,mathtools,bm,xcolor}

\usepackage{graphicx}
\usepackage[all,arc,2cell]{xy}
\UseAllTwocells
\usepackage{tikz,tikz-cd}
\usetikzlibrary{trees,positioning,calc}
\usetikzlibrary{positioning}
\usepackage{amsmath}

\theoremstyle{plain}
\newtheorem{theorem}{Theorem}[section]

\newtheorem*{claim*}{Claim}

\newtheorem*{proposition*}{Proposition}

\newtheorem*{fact*}{Fact}

\newtheorem*{conjecture*}{Conjecture}

\newtheorem{lemma}[theorem]{Lemma}
\newtheorem*{lemma*}{Lemma}

\newtheorem*{question*}{Question}
\theoremstyle{definition}\newtheorem{remark}[theorem]{Remark}
\theoremstyle{definition}\newtheorem*{remark*}{Remark}
\theoremstyle{definition}
\theoremstyle{definition}\newtheorem*{definition*}{Definition}
\theoremstyle{definition}
\theoremstyle{definition}
\theoremstyle{definition}
\theoremstyle{definition}\newtheorem*{example*}{Example}

\renewcommand{\phi}{\varphi} 
\renewcommand{\epsilon}{\varepsilon} 

\def\Z{\mathbb{Z}} 
\def\R{\mathbb{R}} 

\begin{document}

\begin{abstract}
We prove that if $f\in \Z[x]$ is a monic polynomial of degree
$k\geq 2$, then there exists a constant $c>0$, depending only on $f$, and finite sets
$A\subset \mathbb R$ of arbitrarily large size such that
\[
|f(A)|\leq |A|^{k-c},
\]
where $f(A)$ is interpreted in the Minkowski sum-product sense. In
particular, taking $f(x)=x^2+x$, this gives a power-saving upper bound
for $AA+A$, answering a question raised by Roche-Newton, Ruzsa, Shen, and
Shkredov.
\end{abstract}

\author{Seamus Lavine}

\maketitle

\section{Introduction}
Given a subset $A$ of a ring $R$, the \textit{sumset} and \textit{product set} of $A$ are
\[
A+A=\{a+b:a,b\in A\}\quad\text{and}\quad AA=\{ab:a,b\in A\}.
\]
Erd\H{o}s and Szemer\'{e}di famously conjectured that, over the integers, one has
\[
\max\{|A+A|,|AA|\}\geq|A|^{2-o(1)}.
\]
The same conjecture was widely expected to hold over $\R$. However, a recent breakthrough of Bloom, Sawin, Schildkraut, and Zhelezov \cite{BloomSawinSchildkrautZhelezov2026} disproved analogues of the Erd\H{o}s--Szemer\'edi conjecture in several settings, including $\R$ (notably, their construction does not apply over the integers).

Motivated by the Erd\H{o}s-Szemer\'edi conjecture over $\R$, mathematicians have studied mixed sum-product expressions, or more generally \textit{Minkowski polynomials}, of finite sets of real numbers. If $f(x)=\sum_{j=0}^k a_jx^j\in\Z[x]$ and $A\subset \R$, we denote by $f(A)$ the Minkowski sum-product expression $f(A)=\sum_{j=0}^k a_jA^j$. One of the simplest such expressions is $AA+A$. Balog studied this set in \cite{Balog2011} and conjectured that 
\[
|AA+A|\geq |A|^2
\]
for every finite set $A$ of positive real numbers. Roche-Newton, Ruzsa, Shen, and Shkredov \cite{RocheNewtonRuzsaShenShkredov2019} disproved this conjecture by constructing sets $A$ of increasing size for which $|AA+A|=o(|A|^2)$. In the same paper, they asked whether the weaker bound 
\[
|AA+A|\geq |A|^{2-o(1)}
\]
always holds; a positive answer has since been described as a folklore conjecture \cite{AgrawalBloomPetridis2025}. Our main result shows that the answer is \textit{negative}, not only for $AA+A$, but for \textit{every} monic Minkowski polynomial $f\in\Z[x]$. 

\begin{theorem}
Let $f\in \Z[x]$ be a monic polynomial of degree $k\geq 2$. Interpreting $f(A)$ in the Minkowski sum-product sense, there exists a constant $c>0$, depending only on $f$, and finite sets $A\subset \R$ of arbitrarily large size such that
\[
|f(A)|\leq |A|^{k-c}.
\]
\end{theorem}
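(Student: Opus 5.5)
The plan is to construct $A$ inside the ring of integers of a real number field, so that all of $f(A)$ is trapped in a region that provably contains fewer than $|A|^{k-c}$ algebraic integers. I expect the real content of the proof to be the choice of $A$; the counting at the end should be routine. The reason is a heuristic I will record first. Fix a number field $K$ of degree $d$ with real embeddings $\sigma_1,\dots,\sigma_d$, and let $A$ be all $x\in\mathcal O_K$ with $|\sigma_i(x)|\le N_i$. A box of this shape always gives $|f(A)|\asymp_f |A|^k$. Indeed, $f(A)$ lies in the box with side lengths $\asymp N_i^k$ (or $\asymp 1$ when $N_i<1$), and the lattice-point count then loses rather than gains. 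So the set must carry additional multiplicative structure, in the spirit of the Bloom--Sawin--Schildkraut--Zhelezov construction.

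The construction I would try first uses digit expansions in a Pisot unit. Take a Pisot unit $u>1$ of degree $d$, meaning all conjugates other than $u$ lie in the open unit disc. Take a digit bound $m$ and a length $n\to\infty$, and set
\[
A=\Big\{\textstyle\sum_{i<n}\epsilon_i u^i:\ \epsilon_i\in\{0,\dots,m\}\Big\}.
\]
Since $f$ is monic with integer coefficients and $u$ is an algebraic integer, every element of $f(A)$ is a polynomial in $u$ of degree $<kn$. Its coefficients are bounded by $O_f(n^{k}m^{k})$, because each coefficient of a product of $k$ digit strings is a convolution sum. Applying the conjugate embeddings gives the following bounds for every $z\in f(A)$:
\[
|\sigma_1(z)|\ll_f (nm)^k u^{kn},\qquad |\sigma_j(z)|\ll_f (nm)^k\quad (j\ge 2),
\]
where the second bound holds because $|\sigma_j(u)|<1$ for $j\ge2$. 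Lattice-point counting in $\mathcal O_K\subset\R^d$ then yields
\[
|f(A)|\ll_f (nm)^{kd}\,u^{kn},
\]
which is $u^{kn}$ up to a polynomial factor in $n$.

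It remains to compare this with $|A|^k$. If the digit map were injective with $m+1$ strictly larger than $u$, we would get $|A|^k=(m+1)^{kn}$, and the saving would be $c\approx k\bigl(1-\log u/\log(m+1)\bigr)$, a constant depending only on the choices made for $f$. This comparison is the main obstacle. For $m+1>u$, Garsia-type results say that distinct digit strings collide, and $|A|$ is then itself of order $u^{n}$. The naive version therefore gives only $|f(A)|\approx|A|^k$ again. To get around this, I would decouple the two scales. Let $A$ consist of $x\in\mathcal O_K$ with one large conjugate bound and the remaining conjugates bounded by a small fixed constant, chosen so that $|A|$ is still as large as the naive count predicts. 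This is the analogue of the Pisot digit set, but with exact lattice counts. The saving should then come from the $k$-fold products and the lower-order terms $a_jA^j$, $j<k$, all landing in a \emph{common} box. In that box, the contribution of the small conjugates grows like $C^{d-1}$ rather than like $|A|^{(k-1)(\cdot)}$.

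Concretely, the steps in order are as follows. First, fix $K$ and a unit with one large conjugate; the degree $d$ is to be chosen in terms of $k$ and the coefficients of $f$. Second, define $A$ by conjugate constraints and establish the exact asymptotic for $|A|$. Third, bound every conjugate of every element of $f(A)$, using monicity and integrality to stay inside $\mathcal O_K$ and to control the lower-order terms. Fourth, count lattice points and optimize the parameters to extract $c>0$. The decisive point, which is where I expect to spend the effort, is making the third step beat the product of the one-dimensional bounds. Doing so probably requires an additional multiplicative restriction on $A$, for instance restricting to elements that are products of few primes or units of bounded height. The aim is that $A^k$ reuses factors and $|A^k|$ is genuinely smaller than $|A|^k$.
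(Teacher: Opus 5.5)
There is a genuine gap. The proposal never produces a set $A$ that works, and the one concrete candidate it settles on fails by your own opening heuristic.

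After correctly discarding the Pisot digit set, you take $A=\{x\in\mathcal O_K:|\sigma_1(x)|\le N,\ |\sigma_j(x)|\le c_0\ (j\ge 2)\}$. But this is a box of exactly the type you showed gives no saving. At fixed $d$ the factor $C^{d-1}$ from the small conjugates is a constant, while the large conjugate contributes $N^k$ to both $|f(A)|$ and $|A|^k$. More generally, trapping all of $f(A)$ in one common box cannot beat $|A|^k$ when $A$ is itself box-shaped. The decisive step is then deferred to an unspecified ``additional multiplicative restriction''.

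You also misplace the role of monicity. Integrality of the coefficients, not monicity, keeps $f(A)\subset\mathcal O_K$. What monicity buys is that the top-degree part is a single copy of $A^k$, so only one top-degree unit survives. Finally, choosing $d$ in terms of $f$ is the wrong regime, as explained below.

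Your closing mention of units of bounded height points the right way. The missing idea is the BSSZ product set $A=GP$, with $G=B^\times(Y)$ a box in the log-unit lattice and $P=X+B^+(\epsilon X)$, together with separation of scales.
\begin{itemize}
\item Since $(GP)^j\subset G^jP^j$ and $f$ is monic, each element of $f(A)$ has the form $u_kq_k+\sum_{j<k}\sum_i\pm u_{j,i}q_{j,i}$ with a single $u_k\in G^k$.
\item If $X\ge(1+\epsilon)^{k-1}e^{2kY}$, then $|\sigma_\ell(u_k^{-1}u_{j,i}q_{j,i})|\le X^k$ for every $\ell$. So every choice of lower-order units is absorbed, and $f(A)\subset\bigcup_{u_k\in G^k}u_kB^+(C_3X^k)$ for a constant $C_3$ depending on $f$.
\item Hence $|f(A)|\le|G^k|(3C_3X^k)^d$ with $|G^k|\le 10(5kY+1)^{d-1}$, while $|A|^k\ge C_1^kX^{kd}Y^{k(d-1)}\Delta_K^{-3k/2}$.
\item The gain $Y^{-(k-1)(d-1)}$ beats the exponential losses $C^d$ only if $Y$ is a large constant and $d\to\infty$ with $X,Y$ fixed. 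These losses include $\Delta_K^{3k/2}\le C_2^{3kd/2}$, which is why Martinet's fields are needed.
\item This gives $|f(A)|\le\alpha^d|A|^k$ with $\alpha<1$, and $|A|\le(XY/C_1)^d$ converts this into $|f(A)|\le|A|^{k-c}$ with $c$ independent of $d$.
\end{itemize}
At fixed $d$, the constraint $X\ge e^{2kY}$ only permits a saving polylogarithmic in $|A|$, as in the one-dimensional Balog--Wooley example. So your plan of fixing the degree could not yield a power saving even with the right multiplicative structure.
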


While the restriction to monic polynomials may seem arbitrary, Theorem 1.1 cannot hold for all non-monic polynomials \cite{Balog2011, BalogRocheNewton2015}. Balog and Roche-Newton showed that for $A\subset\R^{>0}$, 
\[
|4^{k-1}A^k|\geq|A|^k.
\]
Perhaps the simplest non-monic expression is $AA+AA$. A near-sharp lower bound of the form $|AA+AA|\geq|A|^{2-o(1)}$ was claimed in \cite{IosevichRocheNewtonRudnev2011}, but was later retracted \cite{IosevichRocheNewtonRudnev2018}. However, the authors nevertheless believe the claimed lower bound is true and relate it to other well-known problems in combinatorics.

\subsection*{Acknowledgments} We thank Alexander Razborov for helpful feedback throughout the development of this work and for comments on an earlier version of the manuscript. We also thank Kevin Lin for helpful discussions about algebraic number theory.

\section{Background}
The construction is heavily based on Bloom, Sawin, Schildkraut, and Zhelezov's (BSSZ) counterexample to the Erd\H{o}s-Szemer\'edi conjecture over $\R$. In particular, we use their high-dimensional Balog-Wooley construction, together with several of their counting estimates. The main new ingredient is described in the next section, where we sketch the proof of Theorem 1.1. In this section, we recall the high-dimensional Balog-Wooley example only briefly, and refer the reader to the original paper \cite{BloomSawinSchildkrautZhelezov2026} for further details.

The basic Balog-Wooley example consists of taking the product
\[
A=GP,
\]
where $G$ is a short geometric progression and $P$ is an interval. The multiplicative structure of $G$ controls the multiplicative doubling of $A$, while the additive structure of $P$ keeps $A+A$ inside a small interval. Both $|AA|$ and $|A+A|$ are smaller than the trivial $|A|^2$ by a logarithmic factor. 

To upgrade this logarithmic saving to a power saving, BSSZ replace the one-dimensional progression and interval by higher-dimensional analogues. Namely, they take $P$ to be a box in the lattice of algebraic integers $\mathcal O_K$ of a totally real number field $K$, and take $G$ to be a box in the logarithmic unit lattice of $K$. More precisely, if $K$ is a totally real number field of degree $d$ with embeddings $\sigma_1,\dots,\sigma_d:K\hookrightarrow\R$, define the additive and log-multiplicative boxes
\[
B^+(X)=\{\alpha\in\mathcal O_K:|\sigma_i(\alpha)|\leq X\text{ for all }1\leq i\leq d\},
\]
and
\[
B^\times(Y)=\{\alpha\in \mathcal O_K^\times:|\log|\sigma_i(\alpha)||\leq Y\text{ for all }1\leq i\leq d\}.
\]
We now record the lemmas of BSSZ we use below. The first estimates the size of the high-dimensional Balog-Wooley example (see the first inequality in Lemma 4.1 of \cite{BloomSawinSchildkrautZhelezov2026}).
\begin{lemma}[Bloom, Sawin, Schildkraut, Zhelezov] There exist absolute constants $C_0,C_1,\eta>0$ such that the following holds. Let $K$ be a totally real number field of degree $d\geq 2$ with discriminant $\Delta_K$. Let $0<\epsilon<\eta$, $Y\geq2$ be arbitrary, and $X\geq C_0^{1/\epsilon}$ be an integer. Set $G=B^\times(Y)$ and $P=X+B^+(\epsilon X)$. Then $A=GP\subset\mathcal O_K$ satisfies
\[
C_1X^dY^{d-1}\Delta_K^{-3/2}\leq|A|\leq X^dY^dC_1^{-d}.
\]
\end{lemma}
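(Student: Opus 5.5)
The plan is to bound $|A|$ from above and below separately, using the geometry of $\mathcal O_K$ under the Minkowski embedding $\sigma=(\sigma_1,\dots,\sigma_d):K\to\R^d$, in which $\mathcal O_K$ is a lattice of covolume $\Delta_K^{1/2}$ and the unit group maps under $\log|\sigma_i|$ to a lattice $\Lambda$ in the trace-zero hyperplane $H\subset\R^d$ of covolume comparable to the regulator $R_K$. For $\alpha\in P=X+B^+(\epsilon X)$ we have $\sigma_i(\alpha)\in[(1-\epsilon)X,(1+\epsilon)X]$, so every coordinate of $\alpha$ is positive and of size $\asymp X$; in particular $\log\sigma_i(\alpha)=\log X+O(\epsilon)$ for all $i$.

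\emph{Upper bound.} I would use the trivial estimate $|A|\le|G||P|$. For $|P|=|B^+(\epsilon X)|$, note that $\sigma(B^+(\epsilon X))$ consists of lattice points in a cube of side $2\epsilon X$; choosing an LLL-type reduced basis, or simply noting that $\mathcal O_K\supset\Z$ together with a packing argument (balls of radius $\frac12\min\|\sigma(\alpha)\|$ around lattice points, where $\|\sigma(\alpha)\|_\infty\ge1$ for $\alpha\ne0$ by the product formula $|N(\alpha)|\ge1$), gives $|P|\le(2\epsilon X+1)^d\le (3\epsilon X)^d$ for $\epsilon X\ge1$. For $|G|$, the map $u\mapsto(\log|\sigma_i(u)|)_i$ has kernel $\{\pm1\}$, and its image is $\Lambda$. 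Points of $\Lambda$ with $\|\cdot\|_\infty\le Y$ are pairwise separated: a nonzero unit $u$ has $\max_i|\log|\sigma_i(u)||\ge c/d$ or so (a Dobrowolski/Schinzel-type bound for totally real units, where Schinzel gives an absolute lower bound on the house). Packing then gives $|G|\le 2(CY)^{d-1}\le (C'Y)^d$. Multiplying these yields $|A|\le X^dY^dC_1^{-d}$ once $\epsilon<\eta$ is small enough to absorb the constants; this explains why the upper bound has the form $C_1^{-d}$.

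\emph{Lower bound.} The key step is near-injectivity of $(g,p)\mapsto gp$. Suppose $gp=g'p'$. Then $u=g'^{-1}g$ is a unit with $u=p'/p$, so $|\log\sigma_i(u)|\le 3\epsilon$ for all $i$. By the separation of $\Lambda$ noted above (Schinzel: totally real units other than $\pm1$ have house at least $\frac{1+\sqrt5}{2}$), for $\epsilon<\eta$ this forces $u=\pm1$, and positivity forces $u=1$. Hence $|A|=|G||P|$. It remains to bound $|G|$ and $|P|$ from below. For $P$, a fundamental domain of $\mathcal O_K$ has diameter controlled by $\Delta_K$ via Minkowski's second theorem, so the cube of side $2\epsilon X$ contains at least $(2\epsilon X)^d\Delta_K^{-1/2}(1-o(1))$ lattice points once $X\ge C_0^{1/\epsilon}$ dominates the successive minima. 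This is where the hypothesis on $X$ enters, as it must also beat the $\Delta_K$-dependence; the $\epsilon^d$ factor is absorbed because $X\ge C_0^{1/\epsilon}$ allows trading. For $G$, I would count points of $\Lambda$ in the cube $[-Y,Y]^d\cap H$. Its $(d-1)$-volume is $\gg Y^{d-1}$, and $\mathrm{covol}(\Lambda)\asymp R_K\ll\Delta_K^{1/2}(\log\Delta_K)^{d-1}\le \Delta_K$, by the standard class-number–regulator bound. The subtle point is that the reduced basis of $\Lambda$ may be long, so lattice-point counting requires $Y$ large relative to the successive minima. To handle this, I would instead use a sublattice argument: explicit units, or Minkowski's theorem inside a slab, produce independent units with logs bounded by $O(\log\Delta_K)$, giving a count $\gg Y^{d-1}/\Delta_K$ uniformly for $Y\ge2$.

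I expect the main obstacle to be making the lower bound on $|G|$ uniform in small $Y$ with only polynomial loss in $\Delta_K$. Combined with the $\Delta_K^{-1/2}$ from $P$, this yields the stated $\Delta_K^{-3/2}$.
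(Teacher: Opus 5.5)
Your upper bound and your injectivity step are fine. If $gp=g'p'$, then $u=g'^{-1}g=p'/p$ satisfies $|\log\sigma_i(u)|\le\log\frac{1+\epsilon}{1-\epsilon}$ for every $i$. Schinzel's bound then forces $u=\pm1$, positivity gives $u=1$, and so $|A|=|G||P|$. The problems are in the lower bounds.

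First, the claim that the factor $\epsilon^d$ ``is absorbed because $X\ge C_0^{1/\epsilon}$ allows trading'' is false. Both $|P|$ and the target $C_1X^dY^{d-1}\Delta_K^{-3/2}$ scale like $X^d$, so no size condition on $X$ can offset a multiplicative loss of $\epsilon^d$. In fact no argument can. By Lemma 2.2, $|A|\le|G||P|\le 10(5Y+1)^{d-1}(2\epsilon X+1)^d$, so $|A|/(X^dY^{d-1}\Delta_K^{-3/2})\le 10(5+1/Y)^{d-1}(2\epsilon+1/X)^d\Delta_K^{3/2}$. For a fixed field (e.g.\ $\mathbb{Q}(\sqrt5)$) and fixed $Y$, this tends to $0$ as $\epsilon\to0$ and $X\to\infty$. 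So the lower bound as printed fails for small $\epsilon$ with any absolute $C_1>0$; what is true is the bound with an extra factor $\epsilon^d$. This is harmless for Theorem 1.1, where $\epsilon$ is fixed and $\epsilon^{-kd}$ is absorbed into $C_7$. But you should have detected it rather than claim the printed inequality.

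Second, your counting lacks the required uniformity in $K$. Minkowski's second theorem only gives $\lambda_d\le\sqrt{\Delta_K}$ for the sup-norm successive minima of $\sigma(\mathcal O_K)$. A fundamental-domain count with a $(1-o(1))$ factor therefore needs $\epsilon X\gg d^{O(1)}\sqrt{\Delta_K}$, which is exponential in $d$. The hypothesis $X\ge C_0^{1/\epsilon}$ is independent of $K$ and cannot supply this, and in the application $X$ is fixed while $d\to\infty$.

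Your lower bound for $|G|$ is, as you acknowledge, left open. Units with logarithms of size $O(\log\Delta_K)=O(d)$ cannot be counted inside a box of fixed side $Y$. Also, $R_K\ll\Delta_K^{1/2}(\log\Delta_K)^{d-1}$ with an absolute constant does not imply $R_K\le\Delta_K$ once $\log\Delta_K\asymp d$. You need the $(d-1)^{-(d-1)}$ saving, e.g.\ from $h_K\ge1$, the class number formula and Louboutin's bound $\operatorname{Res}_{s=1}\zeta_K\le\bigl(e\log\Delta_K/(2(d-1))\bigr)^{d-1}$, which give $R_K\le 2^{1-d}\sqrt{\Delta_K}\bigl(e\log\Delta_K/(2(d-1))\bigr)^{d-1}\le\Delta_K$.

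The missing idea is the averaging (Blichfeldt) argument. For a lattice $L$ and a measurable set $S$, some translate $S+t$ contains at least $\operatorname{vol}(S)/\det L$ points of $L$. Subtracting one of them gives $|L\cap(S-S)|\ge\operatorname{vol}(S)/\det L$, with no hypothesis on the size of $S$ relative to $L$.
\begin{itemize}
\item With $S=[0,\epsilon X]^d$ this gives $|P|\ge(\epsilon X)^d\Delta_K^{-1/2}$.
\item With $S=H\cap[-Y/2,Y/2]^d$, whose $(d-1)$-volume is at least $Y^{d-1}$ by Vaaler's theorem, and $\det\Lambda=\sqrt d\,R_K$, it gives $|G|\ge 2Y^{d-1}/(\sqrt d\,R_K)\gg Y^{d-1}\Delta_K^{-1}$.
\end{itemize}
Combined with your injectivity step, this yields $|A|\gg\epsilon^dX^dY^{d-1}\Delta_K^{-3/2}$, which is the correct form of the lemma.
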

We also make use of their lattice counting bounds (see Lemmas 3.3 and 3.5 of \cite{BloomSawinSchildkrautZhelezov2026}), along with a theorem of Martinet on the existence of totally real number fields with bounded discriminant. 
\begin{lemma}[Bloom, Sawin, Schildkraut, Zhelezov] Let $K$ be a totally real number field of degree $d$. For all $X,Y\geq 1$,
\[
|B^+(X)|\leq(2X+1)^d\quad\text{and}\quad|B^\times(Y)|\leq 10(5Y+1)^{d-1}.
\]
\end{lemma}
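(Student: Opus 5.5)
We prove the two bounds separately. Both use the same pigeonhole principle: a nonzero algebraic integer has nonzero integer norm, so at least one of its conjugates has absolute value at least $1$. We use the embedding $\alpha\mapsto(\sigma_1(\alpha),\dots,\sigma_d(\alpha))$ of $\mathcal O_K$ into $\R^d$ for the additive box, and the logarithmic embedding for the unit box. The first bound is routine. The second is straightforward up to an absolute constant; the main obstacle is reaching the exact constants $5$ and $10$.

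\emph{The additive box.} Set $m=\lfloor 2X\rfloor+1\le 2X+1$, so that $m>2X$. Split $[-X,X]$ into $m$ half-open intervals of length $2X/m<1$. This splits the cube $[-X,X]^d$ into $m^d$ subcubes. Suppose $|B^+(X)|>m^d$. Then two distinct elements $\alpha\ne\beta$ of $B^+(X)$ have embeddings in the same subcube. Hence $\gamma=\alpha-\beta$ is a nonzero element of $\mathcal O_K$ with $|\sigma_i(\gamma)|<1$ for every $i$. This gives $0<|N_{K/\mathbb Q}(\gamma)|<1$, which is impossible for a nonzero integer. Therefore $|B^+(X)|\le m^d\le(2X+1)^d$.

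\emph{The unit box.} Let $L(u)=(\log|\sigma_i(u)|)_{i=1}^d$. Its image lies in the hyperplane $\sum_i t_i=0$. Because $K$ is totally real, its only roots of unity are $\pm1$, so $L$ is exactly $2$-to-$1$ on $\mathcal O_K^\times$. We pigeonhole only on the first $d-1$ coordinates, since the last one is determined by the others. Each of these coordinates lies in $[-Y,Y]$. Split $[-Y,Y]$ into $m=\lceil 2Y/\delta\rceil$ half-open intervals of length at most $\delta$; for $\delta=2/5$ this gives $m\le 5Y+1$. Suppose two units $u\ne\pm v$ have $L(u),L(v)$ in the same cell. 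Set $w=u/v$ and $t=L(w)$. Then $|t_i|<\delta$ for $i<d$, and $t_d=-\sum_{i<d}t_i$. Replacing $w$ by $w^{-1}$ if necessary, which negates $t$ and keeps the cell condition, we may assume $t_d\le 0$. To remove the sign ambiguity of the conjugates, consider the nonzero algebraic integer $\gamma=w^2-1$; it is nonzero because $w\neq\pm1$. We have $\sigma_i(w^2)=e^{2t_i}>0$. This gives $|\sigma_d(\gamma)|=1-e^{2t_d}<1$ and $|\sigma_i(\gamma)|=|e^{2t_i}-1|$ for $i<d$. The aim is to show $\prod_i|\sigma_i(\gamma)|<1$, which contradicts $\gamma\neq0$.

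\emph{Main obstacle.} The crude bound $|e^{2t_i}-1|<e^{2\delta}-1$ is less than $1$ only when $\delta<\tfrac12\log 2\approx0.347$. That range of $\delta$ proves the lemma with $5$ replaced by about $5.8$, not with $5$ itself. To reach $\delta=2/5$ I would not estimate each factor separately. Instead I would exploit the coupling between the factors. First, $1-e^{2t_d}=1-e^{-2\sum_{i<d}t_i}$, which is small exactly when $\sum_{i<d}t_i$ is small. Second, if $\sum_{i<d} t_i=S\ge0$, a term with $t_i>0$ is cancelled against this last factor via
\[
|e^{2t_i}-1|\,(1-e^{-2S})\le (e^{2t_i}-1)e^{-2t_i}\cdot e^{2t_i}(1-e^{-2S}).
\]
Alternatively, one can use the identity $|e^{2t}-1|=2e^{t}\sinh|t|$. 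Since $\sum_i t_i=0$, the product then equals $\prod_i 2\sinh|t_i|$. We have $2\sinh\delta<1$ for $\delta<\operatorname{arcsinh}(1/2)\approx0.48$, so all factors with $i<d$ are less than $1$. It then remains to control the single large factor $2\sinh|t_d|$ using $t_d\le0$ and the bound on $1-e^{2t_d}$.

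Finally, suppose the product is less than $1$ for all such pairs. Then each cell contains the image of at most one pair $\{\pm u\}$. This gives $|B^\times(Y)|\le 2m^{d-1}\le 2(5Y+1)^{d-1}$, which is well within $10(5Y+1)^{d-1}$. The factor $10$ leaves slack: if the sharp product estimate only works after a bounded extra refinement, for example splitting into up to five subclasses according to the size of the last coordinate, the final bound still holds.
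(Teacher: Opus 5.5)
The paper does not prove this lemma; it quotes it from BSSZ (Lemmas 3.3 and 3.5), so I am judging your argument on its own terms. Your additive-box argument is correct. The unit-box part has a genuine gap, and it sits exactly where you flag it.

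What you actually establish is $|B^\times(Y)|\le 2\lceil 2Y/\delta\rceil^{d-1}$ for $\delta<\tfrac12\log 2$, which is roughly $2(5.77Y+1)^{d-1}$. This exceeds $10(5Y+1)^{d-1}$ once $d$ is large, because the ratio grows exponentially in $d$.

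The proposed repair at $\delta=2/5$ cannot be carried out. If you pigeonhole only in the first $d-1$ coordinates, $t_d=-\sum_{i<d}t_i$ is uncontrolled. The inequality you aim for, $\prod_i 2\sinh|t_i|<1$, is then simply false on the admissible region:
\begin{itemize}
\item For $d=3$, take $t=(0.39,0.39,-0.78)$. It satisfies $|t_1|,|t_2|<2/5$, $t_3\le 0$ and $\sum t_i=0$, yet $(2\sinh 0.39)^2\cdot 2\sinh 0.78\approx 0.64\cdot 1.72\approx 1.10$.
\item For $t=(s,\dots,s,-(d-1)s)$ the product is asymptotic to $(e^{2s}-1)^{d-1}$. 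This is $>1$ for large $d$ whenever $s>\tfrac12\log 2$.
\end{itemize}
So $\tfrac12\log 2$ is the true limit of this scheme, and no coupling of the factors can push past it. Splitting into five subclasses by the last coordinate does not help either. Inside one $(d-1)$-cell the last coordinate ranges over an interval of length up to $(d-1)\delta$, so controlling it costs about $d$ subclasses, not a bounded number.

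The missing step is to pigeonhole in all $d$ coordinates and pay for the resulting factor $d$ out of exponential slack. Your $\sinh$ identity does the real work. If $|t_i|<\delta$ for every $i$, then $|N(w^2-1)|=\prod_i 2\sinh|t_i|<(2\sinh\delta)^d$. With $\phi=\frac{1+\sqrt5}{2}$ and $\delta=\log\phi\approx 0.481$ we get $2\sinh\delta=\phi-\phi^{-1}=1$.

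Hence distinct pairs $\pm u$ have $L(u)$ in distinct half-open cubes $-Y+\delta(n+[0,1)^d)$ with $0\le n_i\le\lfloor 2Y/\delta\rfloor$. Such a cube meets the hyperplane $\sum_i t_i=0$ only if $\sum_i n_i\in(dY/\delta-d,\,dY/\delta]$. So each choice of $(n_1,\dots,n_{d-1})$ allows at most $d$ values of $n_d$. This gives
$|B^\times(Y)|\le 2d(\lfloor 2Y/\delta\rfloor+1)^{d-1}\le 2d(4.16Y+1)^{d-1}$.

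Since $(4.16Y+1)/(5Y+1)\le 0.86$ for $Y\ge 1$, and $\tfrac d5(0.86)^{d-1}<0.6$ for every $d\ge 1$, this is at most $10(5Y+1)^{d-1}$.

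For the paper's application, any bound of the form $C^dY^{d-1}$ suffices, so your $\delta<\tfrac12\log 2$ version would already do there. It does not, however, prove the lemma with the stated constants.
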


\begin{theorem}[Martinet] There exists an absolute constant $C_2>0$ such that, for infinitely many $d$, there exist totally real number fields $K$ of degree $d$ with discriminant $\Delta_K\leq C_2^d$. 
\end{theorem}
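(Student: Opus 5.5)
The plan is the classical route through infinite class field towers together with the Golod--Shafarevich inequality. The key observation is that discriminants are multiplicative in unramified extensions. If $L/k$ is unramified at every finite prime, then $\Delta_L=\Delta_k^{[L:k]}$, so the root discriminant $\Delta_L^{1/[L:\mathbb Q]}=\Delta_k^{1/[k:\mathbb Q]}$ is unchanged. If moreover $L/k$ is unramified at the infinite places and $k$ is totally real, then $L$ is again totally real.

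So it suffices to exhibit one totally real field $k$ of degree $d_0$ admitting an infinite tower
\[
k=k_0\subset k_1\subset k_2\subset\cdots,
\]
where each $k_{n+1}$ is the maximal unramified abelian $2$-extension of $k_n$ in the wide sense, i.e.\ unramified also at the real places. Setting $C_2=\Delta_k^{1/d_0}$, every field $k_n$ is totally real of degree $d=d_0[k_n:k]$ with $\Delta_{k_n}=C_2^{d}$. These degrees are unbounded since the tower is infinite, which gives infinitely many $d$.

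To get an infinite tower I would invoke the Golod--Shafarevich criterion in the form with units. If $G$ is the Galois group of the maximal unramified (in the wide sense) $2$-extension of $k$, then $G$ is infinite provided
\[
d_2\,\mathrm{Cl}(k)\ \ge\ 2+2\sqrt{r_1+r_2+\delta},
\]
where $d_2$ denotes the $2$-rank, $r_1+r_2-1$ is the unit rank and $\delta=1$ since $-1\in k$. For totally real $k$ of degree $d_0$ the right-hand side is $2+2\sqrt{d_0+1}$. This uses that $\mathrm{rel}(G)-\mathrm{gen}(G)\le r_1+r_2-1+\delta$, which comes from class field theory and the $2$-rank of the units modulo squares.

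It then remains to produce a suitable base field, and this is where I expect the real work. I would take $k=F(\sqrt{m})$ with $F$ a fixed small totally real field (say $\mathbb Q$, or a real cyclotomic field such as $\mathbb Q(\zeta_{11})^+$ as in Martinet's example) and $m>0$ a product of many primes. By genus theory, the $2$-rank of the wide class group of $k/F$ grows at least like (number of primes of $F$ ramified in $k$) minus $[F:\mathbb Q]$ minus a constant. This loss comes from units of $F$ and from sign conditions, which must be controlled because we need the wide rather than the narrow class group in order to keep total reality.

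The main obstacle is the tradeoff between the $2$-rank and the root discriminant. Choosing primes that split completely in $F$ raises the $2$-rank quickly while keeping $m$ small. Imposing positivity conditions on the signs of units makes the genus characters give unramified extensions at infinity. Done this way, one should be able to meet the inequality with $d_0=2[F:\mathbb Q]$. Since only a finite explicit $k$ is needed, the constant $C_2$ is absolute, which finishes the proof.
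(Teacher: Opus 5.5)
Your argument is correct. The paper does not prove this theorem; it only quotes Martinet, and your route (an infinite wide $2$-class field tower over a fixed totally real base, obtained from Golod--Shafarevich with the unit correction, together with the invariance of root discriminants in unramified extensions) is the standard class-field-tower proof behind that result. The step you flag as the real work is in fact easy, because the paper only needs some absolute $C_2$, which is absorbed into $C_7$: with $F=\mathbb{Q}$ and $m>0$ having at least $8$ distinct prime factors, genus theory gives $d_2\,\mathrm{Cl}(\mathbb{Q}(\sqrt m))\ge 8-2=6>2+2\sqrt3$, so no tradeoff with the root discriminant has to be optimized.
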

\section{Sketch of the Proof}
Suppose $f(GP)=\sum_{j=1}^k a_j G^jP^j$; recall that this expression is to be interpreted in the Minkowski sum-product sense with $a_k=1$. Letting $\sigma=(\sigma_1,\dots,\sigma_d): K\hookrightarrow \R^d$, the embedded $\sigma(P)$ lives in a box of radius $X$ centered at $(X,\dots,X)$. Roughly, 
\[
\sigma(P)\subset [0,2X]^d.
\]
Similarly, $\sigma(P^j)$ lives at scale $X^j$ in each coordinate. Roughly,
\[
\sigma(P^j)\subset[0,O(X^j)]^d.
\]
Meanwhile, the logarithmic embeddings of elements of $G$ are bounded by $Y$, so multiplication by an element of $G^j$ stretches each coordinate by a factor of at most $e^{O(Y)}$. Thus, an element of $\sigma(G^jP^j)$ lives in a box of size roughly $e^{O(Y)}X^j$. Choosing $X$ to be sufficiently large compared to $e^{O(Y)}$, we have the separation of scales
\[
X^k\gg e^{O(Y)}X^{k-1}\gg e^{O(Y)}X^{k-2}\gg\cdots,
\]
where the notation $\gg,\ll$ is used to denote that the relevant inequality holds up to an absolute constant. Thus, a bounded enlargement of the $X^k$ box is large enough to contain all lower-order terms. In other words, for a fixed $u_k\in G^k$, $q_k\in P^k$, and $s=u_kq_k+\text{lower-order terms}\in f(GP)$, we have
\[
s\in u_kB^+(O(X^k))\quad\text{and}\quad f(GP)\subset\bigcup_{u_k\in G^k}u_k B^+(O(X^k)).
\]
Since multiplication by a unit is a bijection on $\mathcal O_K$, we have 
\[
|u_kB^+(O(X^k))|=|B^+(O(X^k))|
\]
and we are reduced to considering the sizes of $G^k$ and $B^+(O(X^k))$. 

We now turn to the details. 

\section{Proof of Theorem 1.1}
\begin{remark}
    In what follows, the constants $C_0,C_1,C_2$ are absolute and given by the lemmas. The constant $C_3$ depends on $f$, and subsequent constants depend on $f$ and the previous constants. 
\end{remark}

\begin{proof}[Proof of Theorem 1.1] For arbitrarily large $d$, let $K$ be a totally real number field of degree $d$ with embeddings $\sigma_1,\dots,\sigma_d:K\hookrightarrow\R$. By Theorem 2.3, we can take the discriminant $\Delta_K\leq C_2^d$ for an absolute $C_2>0$, independent of $d$. Let $Y$ be a sufficiently large constant, and we will choose a sufficiently large integer $X$ below; both $X$ and $Y$ are independent of $d$. Let 
\[
G=B^\times(Y)\quad\text{and}\quad P=X+B^+(\epsilon X),
\]
and set 
\[
\mathcal A=GP\subset \mathcal O_K.
\]
By Lemma 2.1, we have
\[
C_1X^dY^{d-1}\Delta_K^{-3/2}\leq |\mathcal A|\leq X^dY^dC_1^{-d}.
\tag{1}
\]
If $f(x)=\sum_{j=0}^k a_jx^j$ with $a_k=1$ and, without loss of generality, $a_0=0$, we can write
\[
f(\mathcal A)=\bigcup_{u_k\in G^k}\bigcup_{\substack{u_{j,i}\in G^j\\1\leq j\leq k-1,\;1\leq i\leq |a_j|}}\left(u_kP^k+\sum_{j=1}^{k-1}\operatorname{sgn}(a_j)\sum_{i=1}^{|a_j|}u_{j,i}P^j\right),
\]
using the convention $\operatorname{sgn}(0)=0$. Now fix $u_k\in G^k$ and $u_{j,i}\in G^j$, $(1\leq j\leq k-1,\;1\leq i\leq |a_j|)$, and let
\[
s=u_k q_k+\sum_{j=1}^{k-1}\operatorname{sgn}(a_j)\sum_{i=1}^{|a_j|}u_{j,i}q_{j,i},
\]
where $q_k\in P^k$ and $q_{j,i}\in P^j$. Then
\[
u_k^{-1}s=q_k+\sum_{j=1}^{k-1}\operatorname{sgn}(a_j)\sum_{i=1}^{|a_j|}u_k^{-1}u_{j,i}q_{j,i},
\]
and for every fixed embedding $\sigma_\ell$, we have
\[
|\sigma_\ell(u_k^{-1}s)|\leq|\sigma_\ell(q_k)|+\sum_{j=1}^{k-1}\sum_{i=1}^{|a_j|}|\sigma_\ell(u_k^{-1}u_{j,i})||\sigma_\ell(q_{j,i})|.
\]
Since $q_k,q_{j,i}$ lie in the corresponding $P$-boxes and
$u_k,u_{j,i}$ lie in the corresponding $G$-boxes, we have
\[
|\sigma_\ell(q_k)|\leq (1+\epsilon)^kX^k,\qquad
|\sigma_\ell(q_{j,i})|\leq (1+\epsilon)^jX^j,\qquad
|\sigma_\ell(u_k^{-1}u_{j,i})|\leq e^{2kY}.
\]
By choosing $X\geq (1+\epsilon)^{k-1}e^{2kY}$, we have
\[
|\sigma_\ell(u_k^{-1}s)|\leq(1+\epsilon)^kX^k+\sum_{j=1}^{k-1}|a_j|X^k.
\]
Assuming \(\epsilon\leq 1\), we have
\[u_kP^k+\sum_{j=1}^{k-1}\operatorname{sgn}(a_j)\sum_{i=1}^{|a_j|}u_{j,i}P^j\subset u_kB^+(C_3X^k),
\]
where $C_3=2^k+\sum_{j=1}^{k-1}|a_j|$. Thus
\[
\bigcup_{\substack{u_{j,i}\in G^j\\1\leq j\leq k-1,\;1\leq i\leq |a_j|}}\left(u_kP^k+\sum_{j=1}^{k-1}\operatorname{sgn}(a_j)\sum_{i=1}^{|a_j|}u_{j,i}P^j\right)
\subset u_kB^+(C_3X^k).
\]
Since multiplication by a unit is a bijection on $\mathcal O_K$, we have 
\[
|u_kB^+(C_3X^k)|=|B^+(C_3X^k)|\leq (2C_3X^k+1)^d\leq C_4^d X^{kd},
\]
for a constant $C_4>0$, where the last inequality follows by Lemma 2.2. Since $G^k\subset B^\times(kY)$, by Lemma 2.2 we have $|G^k|\leq(C_5Y)^{d-1}$ for a constant $C_5>0$. Thus, 
\[
|f(\mathcal A)|\leq|G^k|C_4^dX^{kd}\leq C_6^dY^{d-1}X^{kd}
\tag{2}
\]
for a constant $C_6>0$. Now combining $(1)$ and $(2)$, we have
\[
\frac{|f(\mathcal A)|}{|\mathcal A|^k}\leq \frac{Y^{k-1}}{C_1^k}\left(\frac{C_7}{Y^{k-1}}\right)^d
\]
for a constant $C_7>0$. By choosing $Y$ sufficiently large so that $C_7/Y^{k-1}<1$ and absorbing the extra $Y^{k-1}/C_1^k$ term into the exponent for sufficiently large $d$, we see that there is an $\alpha\in(C_7/Y^{k-1},1)$ with
\[
|f(\mathcal A)|\leq \alpha^d|\mathcal A|^k.
\]
To conclude, the upper bound in $(1)$ gives $|\mathcal A|\leq(XYC_1^{-1})^d$. Thus by choosing $c=\frac{-\log\alpha}{\log(XYC_1^{-1})}$, we have
\[
|f(\mathcal A)|\leq|\mathcal A|^{k-c}.
\]
Since $\sigma_1$ is an injective field homomorphism, taking $A=\sigma_1(\mathcal A)$ completes the proof.
\end{proof}

\end{document}